\newtheorem{thm}{Theorem}
\newtheorem{remark}{Remark}
\begin{document}

\title{Series for $1/\pi$ Using Legendre's Relation}

\author{J. G. Wan}
\address{School of Mathematical and Physical Sciences, The University of Newcastle, Callaghan, NSW 2308, Australia}
\email{james.wan@newcastle.edu.au}

\date{\today}

\maketitle

\begin{abstract}
We present a new method for producing series for $1/\pi$ and other constants using Legendre's relation, starting from a generation function that can be factorised into two elliptic $K$'s; this way we avoid much of  modular theory or creative telescoping. Many of our series involve special values of Legendre polynomials; their relationship to the more traditional Ramanujan series is discussed.
\end{abstract}

\section{Introduction}

Ramanujan-type series for $1/\pi$ have been extensively studied since \cite{Ra}. Originally taking the form 
\begin{equation} \label{rama}
\sum_{n=0}^\infty\frac{(\frac12)_n(s)_n(1-s)_n}{n!^3}(a+bn)z_0^n =\frac c\pi,
\end{equation}
where $s \in \{1/2,1/3,1/4,1/6\}$, many such series were found to be rational -- that is, they enjoy the property $a,b,z_0,c^2 \in \mathbb{Q}$. Those rational series are of theoretical as well as practical interest. Recently, with works such as \cite{CTYZ} and \cite{WZ}, more general series are being studied. A more encompassing series for $1/\pi$ would take the form
\begin{equation} \label{genseries}
\sum_{n=0}^\infty U(n) \,p(n)\,z_0^n =\frac{c}{\pi^k},
\end{equation}
where $U(n)$ is an arithmetic sequence, and $p(n)$ is a polynomial (often linear or quadratic in $n$). For instance, in \cite{WZ}, $U(n)$ can take the form of an Ap\'ery-like sequence times a Legendre polynomial evaluated at a special argument, $P_n(x_0)$.

Most of the current methods for producing such series rely on one of the following methods: 
\begin{itemize}
\item  Hypergeometric series (through the use of Clausen's formula and singular values of the complete elliptic integral $K$), pioneered by the Borweins \cite{Bor};
\item Modular machinery (taking advantages of algebraic relations of modular forms of the same weight, e.g.~modular equations) together with the first approach, see for example \cite{Chud}, and more recently \cite{CTYZ, CWZ};
\item Experimental mathematics (creative telescoping with the Wilf-Zeilberger algorithm), explored in say \cite{JG2};
\item Summation formulas for hypergeometric series at special arguments, Fourier-Legendre series, and other miscellaneous methods, for instance see \cite{chu} and \cite{glaisher}.
\end{itemize}
There exists a huge literature apart from the ones we cited above; we cannot even give a brief account for them here since our approach is quite different, so we only direct interested readers to the survey article \cite{survey}.

A notable feature of $1/\pi$ series produced using the methods above has been severe restrictions in the argument of the geometric term ($z_0$ in \eqref{genseries}), as $z_0$ may need to come from singular values of $K$, or be a special value for a summation formula to work. Here we give a new method for producing series for $1/\pi$, as well as some related constants, using only Legendre's relation. The method presented here breaks such restrictions so the argument can be any real number for which the underlying series converges.

\section{Legendre's relation}

Our analysis hinges on Legendre's relation \cite[Theorem 1.6]{Bor}, which states 
\begin{equation} \label{legendrer}
E(x)K'(x)+E'(x)K(x)-K(x)K'(x) = \frac{\pi}{2}.
\end{equation}
As usual, in hypergeometric notation 
\[K(x) = \frac{\pi}{2}{_2F_1}\biggl({{\frac12,\frac12}\atop 1};x^2\biggr)\] is the complete elliptic integral of the first kind, $E(x)$ is the complete elliptic integral of the second kind, and $K'(x)$ denotes $K(x')$, where $x' = \sqrt{1-x^2}$ is the complementary modulus. Equation \eqref{legendrer} can be easily proven by differentiating both sides and showing that they agree at one point (say at $x=1/\sqrt{2}$). A more general form of \eqref{legendrer} in fact holds \cite[equation (5.5.6)]{Bor}:
\begin{equation} \label{legendregen}
E_s(x)K_s'(x)+E_s'(x)K_s(x)-K_s(x)K_s'(x) = \frac{\pi}{2}\frac{\cos(\pi s)}{1+2s},
\end{equation}
where 
\[ K_s(x) = \frac{\pi}{2}\,{_2F_1}\biggl({{\frac12-s,\frac12+s}\atop 1};x^2\biggr), \quad E_s(x) = \frac{\pi}{2}\,{_2F_1}\biggl({{-\frac12-s,\frac12+s}\atop 1};x^2\biggr). \]
Set $s=0$ in \eqref{legendregen} and we recover \eqref{legendrer}. Note also that in \eqref{legendregen}, $s$ is not restricted to the four values as in \eqref{rama}. \\

Suppose we have a factorisation of the following type:
\begin{equation} \label{key}
 \pi^2 G(z) = K(a(z)) K(b(z)),
\end{equation}
where $G$ is analytic near the origin and satisfies an ordinary differential equation of degree no less than 4 -- for instance, $G$ could be a $_4F_3$. (The condition on the degree of the differential equation for $G$ is imposed because we will solve a system of four equations below, so having three linearly independent derivatives help.) Suppose further that we can find a number $z_0$ such that $a(z_0)^2 = 1-b(z_0)^2$, so that the right hand side of \eqref{key} becomes $K(a(z_0))K'(a(z_0))$. We then consider a linear combination of derivatives of equation \eqref{key}, namely
\begin{align} 
& \pi^2\bigl(A_0 G(z_0) + A_1 \frac{\mathrm d}{\mathrm d z}G(z_0) + A_2 \frac{\mathrm d}{\mathrm d z^2}G(z_0) + A_3 \frac{\mathrm d}{\mathrm d z^3}G(z_0)\bigr) \nonumber \\
= & B_0 KK'(z_0) + B_1 EK'(z_0)+ B_2 E'K(z_0) + B_3 EE'(z_0), \label{diff3}
\end{align}
where $A_i$ are constants that may depend on $z_0$, while $B_i$ depend on $A_i$. The equality in \eqref{diff3} holds because derivatives of $E$ and $K$ are again expressible in terms of $E$ and $K$.
It remains to solve (if possible) the following system of equations for $A_i$,
\[ B_0=-1, \ B_1 = 1, \ B_2 = 1, \ B_3 = 0, \]
so that we may apply Legendre's relation \eqref{legendrer} to \eqref{diff3} and obtain, for those choices of $A_i$,
\begin{equation} \label{solved} A_0 G(z_0) + A_1 \frac{\mathrm d}{\mathrm d z}G(z_0) + A_2 \frac{\mathrm d}{\mathrm d z^2}G(z_0) + A_3 \frac{\mathrm d}{\mathrm d z^3}G(z_0) = \frac{1}{2\pi}. \end{equation}
A series for $1/\pi$ is thus obtained; when written as a sum, the left hand side typically contains a cubic of the summation variable. We will illustrate such series using different choices of $G$ below.

\section{Brafman's formula}

An example of a factorisation in the form of \eqref{key} comes from Brafman's formula \cite{Br1} involving the Legendre polynomials,
\[ P_n(x)={}_2F_1\biggl({{-n,  n+1}\atop 1}; \frac{1-x}2 \biggr). \]
Brafman's formula has been used to produce (a different type of) series for $1/\pi$ in \cite{CWZ, WZ}; it states that
\begin{equation} \label{braf1}
\sum_{n=0}^\infty\frac{(s)_n(1-s)_n}{n!^2}P_n(x)z^n
={}_2F_1\biggl({{s,1-s}\atop 1};\alpha\biggr)\, {}_2F_1\biggl({{s,1-s}\atop 1};\beta\biggr),
\end{equation}
where $\alpha=(1-\rho-z)/2$, $\beta=(1-\rho+z)/2$, and $\rho=(1-2xz+z^2)^{1/2}$.

Although equation \eqref{braf1} is of type \eqref{key}, solving for $\alpha^2=1-\beta^2$ only results in a trivial identity. Therefore our strategy is to modify the arguments $\alpha$ or $\beta$ via some transformations.

\medskip

\subsection{The $s=1/2$ case}

Using $s=1/2$ and applying a quadratic transformation of $K$ \cite[theorem 1.2]{Bor} to one of the terms in \eqref{braf1}, we obtain
\begin{equation} \label{braf2} \frac{\pi^2}{4} \sum_{n=0}^\infty \frac{(\frac12)_n^2}{n!^2} P_n(x)z^n = \frac{1}{1+\alpha^{1/2}} \, K\biggl(\frac{2\alpha^{1/4}}{1+\alpha^{1/2}}\biggr) K\bigl(\beta^{1/2}\bigr). \end{equation}

This fits the type of \eqref{key}. After significant amount of algebra as outlined by the approaches leading to \eqref{solved}, we have the following: 
\begin{thm} \label{thm1leg} For $k \in (0,1)$,
\small
\begin{align} \nonumber & \sum_{n=0}^\infty \binom{2n}{n}^2 P_n\biggl(\frac{-k^4 + 6 k^3 - 2k + 1}{(k^2 + 1) (k^2 + 2 k - 1)}\biggr) \biggl(\frac{(k^2 + 1) (k^2 + 2 k - 1)}{16(k + 1)^2}\biggr)^n  \bigl(C_3 n^3 + C_2 n^2+C_1 n + C_0\bigr)  \\ 
&  = \frac{2(k+1)^3(k^2+1)}{\pi}, \label{thm1state}
\end{align} 
where 
\begin{align*}
C_3 & = 4 (k-1)^2 k^2 (k^2 + 3 k + 4)^2, \\
C_2 & = 12 (k - 1) k (k^6 + 5 k^5 + 10 k^4 + 10 k^3 + 5 k^2 - 3 k + 4), \\
C_1 & = 9 k^8 + 36 k^7 + 37 k^6 + 8 k^5 - 9 k^4 - 56 k^3 + 63 k^2 - 28 k + 4, \\
C_0 & = (k^2 + 2k - 1)^2 (2 k^4 + 3 k^2 - 2 k + 1).
\end{align*} \normalsize
\end{thm}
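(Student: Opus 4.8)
The plan is to apply the scheme of Section~2 to the factorisation \eqref{braf2}, with analytic function $G(z)=\tfrac14\sum_{n\ge0}\frac{(1/2)_n^2}{n!^2}P_n(x)\,z^n=\tfrac14\sum_{n\ge0}\binom{2n}{n}^2P_n(x)\,(z/16)^n$, moduli $a(z)=\dfrac{2\alpha^{1/4}}{1+\alpha^{1/2}}$ and $b(z)=\beta^{1/2}$, and scalar prefactor $f(z)=\dfrac1{1+\alpha^{1/2}}$, where $\alpha,\beta,\rho$ are as in \eqref{braf1} and $x$ is a fixed parameter. Differentiating \eqref{braf2} in $z$ up to three times, evaluating at an appropriate $z_0$, forming the combination \eqref{diff3} and invoking Legendre's relation \eqref{legendrer} will produce \eqref{solved}; expanding $G^{(j)}(z_0)$ as a power series and collecting the $n$-dependence then yields a sum carrying a cubic factor in $n$, which is the shape of \eqref{thm1state}.

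\emph{Step 1: the curve.} First I would impose the complementary-modulus condition $a(z)^2+b(z)^2=1$ of Section~2, which here reads $\dfrac{4\alpha^{1/2}}{(1+\alpha^{1/2})^2}=1-\beta$. Using $\alpha+\beta=1-\rho$ and $\rho^2=1-2xz+z^2$ this is a single algebraic relation between $x$ and $z$; I would solve it together with the demands that $\alpha$ be a perfect fourth power, $\beta$ a perfect square and $1-2xz+z^2$ a perfect square in $\mathbb Q(k)$, so that after specialisation $a(z_0)$, $b(z_0)$, $f(z_0)$ and their first three $z$-derivatives all become rational functions of the single parameter $k$. Executing this, realistically with a computer algebra system, is expected to return exactly the rational parametrisation $x=x(k)$ and $z_0=z_0(k)$ of the statement, with $z_0(k)/16=\frac{(k^2+1)(k^2+2k-1)}{16(k+1)^2}$, together with explicit expressions in $k$ for $a_0:=a(z_0)$, its complement $b_0=b(z_0)=\sqrt{1-a_0^2}$, the value $f(z_0)$, and the derivatives up to order three of $a,b,f$ at $z_0$.

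\emph{Step 2: collapse to $E$ and $K$, then solve.} Since $\frac{\mathrm d K}{\mathrm d m}=\frac{E-(1-m^2)K}{m(1-m^2)}$ and $\frac{\mathrm d E}{\mathrm d m}=\frac{E-K}{m}$ are linear in $K(m),E(m)$, the four products $K(a)K(b)$, $E(a)K(b)$, $K(a)E(b)$, $E(a)E(b)$ span a $4$-dimensional space stable under $\frac{\mathrm d}{\mathrm d z}$, and multiplying through by the scalar $f(z)$ (whose logarithmic derivative stays in the ground field) preserves stability; hence $G$ satisfies a fourth-order differential equation as anticipated in Section~2, and $G,G',G'',G'''$ all lie in $f(z)\cdot\mathrm{span}\{K(a)K(b),E(a)K(b),K(a)E(b),E(a)E(b)\}$. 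Evaluating at $z=z_0$ and using $a_0^2+b_0^2=1$ rewrites these four products in terms of $KK'$, $EK'$, $E'K$, $EE'$ at the single modulus $a_0$, so that $\pi^2\,G^{(j)}(z_0)=\sum_{i=0}^3 Q_{ji}X_i$ with $X_0=KK'(a_0)$, $X_1=EK'(a_0)$, $X_2=E'K(a_0)$, $X_3=EE'(a_0)$, and with an explicit $4\times4$ matrix $Q=(Q_{ji})$ over $\mathbb Q(k)$. I would then solve $\sum_j A_j Q_{j0}=-1$, $\sum_j A_j Q_{j1}=1$, $\sum_j A_j Q_{j2}=1$, $\sum_j A_j Q_{j3}=0$ for $A_0,\dots,A_3$ — possible provided $\det Q\ne0$ on the curve — whereupon \eqref{diff3} collapses via \eqref{legendrer} to $\pi^2\sum_j A_j G^{(j)}(z_0)=EK'+E'K-KK'=\tfrac\pi2$, which is \eqref{solved}.

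\emph{Step 3: return to a series, and the obstacle.} Expanding termwise, $G^{(j)}(z_0)=\tfrac14\sum_{n\ge0}\binom{2n}{n}^2P_n(x_0)(z_0/16)^n\,n(n-1)\cdots(n-j+1)\,z_0^{-j}$, so $\sum_{j=0}^3 A_j G^{(j)}(z_0)=\tfrac14\sum_{n\ge0}\binom{2n}{n}^2P_n(x_0)(z_0/16)^n\,q(n)$ with $q(n)=\sum_j A_j z_0^{-j}\,n(n-1)\cdots(n-j+1)$ a cubic in $n$. Passing to the monomial basis in $n$ and clearing denominators by $(k+1)^3(k^2+1)$ is expected to reproduce exactly the cubic $C_3n^3+C_2n^2+C_1n+C_0$ and to turn $\tfrac1{2\pi}$ into $2(k+1)^3(k^2+1)/\pi$; it then remains to check that for $k\in(0,1)$ one has $a_0,b_0\in(0,1)$ (so \eqref{braf2} and the underlying quadratic transformation are legitimate) and that $z_0(k)/16$ lies strictly below the radius of convergence of $\sum_n\binom{2n}{n}^2P_n(x_0)\,t^n$, which together secure \eqref{thm1state}. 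The main obstacle is computational rather than conceptual: finding in Step~1 a parametrisation that simultaneously rationalises $\alpha^{1/4}$, $\beta^{1/2}$ and $\rho$, and then computing $Q(k)$ and inverting the $4\times4$ system with $k$ kept symbolic. The only genuine checkpoints are the non-vanishing of $\det Q$ along the curve and the convergence and validity of all the transformations used throughout $k\in(0,1)$.
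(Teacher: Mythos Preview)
Your proposal is correct and follows essentially the same route as the paper: apply the scheme of Section~2 to the factorisation \eqref{braf2}, choose $(x,z_0)$ so that the two $K$-arguments are complementary, differentiate three times, solve the resulting $4\times4$ linear system in the $A_i$, and invoke Legendre's relation. The paper streamlines your Step~1 by taking $\beta(z_0)^{1/2}=k$ as the parameter from the outset (so that $\alpha(z_0)=(1-k)^2/(1+k)^2$ and $a(z_0)=\sqrt{1-k^2}$ drop out immediately), rather than imposing and solving several simultaneous rationality constraints; note in particular that $\alpha(z_0)$ is only a perfect \emph{square} in $\mathbb{Q}(k)$, not a fourth power, so your stated condition is slightly too strong, though harmless since only $a^2,b^2$ enter the derivatives. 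The paper also makes the convergence check in your Step~3 explicit, using the standard asymptotics $P_n(x)=O\bigl((|x|+\sqrt{x^2-1})^n\bigr)$ to exhibit the geometric rate as a function of $k$.
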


\begin{proof}
A little algebra shows that if we choose
\[ x = \frac{1-2k+6k^3-k^4}{(k^2+1)(k^2+2k-1)}, \ z_0 = \frac{(k^2+1)(k^2+2k-1)}{(k+1)^2}, \]
then, viewing $\alpha$ and $\beta$ as functions of $z$, we get $\beta(z_0)^{1/2} = k$, and $2\alpha(z_0)^{1/4}/(1+\alpha(z_0)^{1/2}) = \sqrt{1-k^2}$, as desired. With these choices we have $\alpha(z_0) = (1-k)^2/(1+k)^2$; we can also compute and simplify the derivatives $a'(z), a''(z), a'''(z)$ and $b'(z), b''(z), b'''(z)$ at $z=z_0$. Thus, as in \eqref{key}, we have an equation of the type
\[ \pi^2 \biggl[\frac{1+\alpha(z)^{1/2}}{4} \sum_{n=0}^\infty \frac{(\frac12)_n^2}{n!^2} P_n(x)z^n\biggr] = K\biggl(\frac{2\alpha(z)^{1/4}}{1+\alpha(z)^{1/2}}\biggr) K\bigl(\beta(z)^{1/2}\bigr), \]
where at $z=z_0$ the arguments of the two $K$'s are complementary. 

We take a linear combination (with coefficients $A_i$) of the $z$-derivatives of the above equation, as done in \eqref{diff3}, then substitute in $z=z_0$ and simplify the resulting expression using the precomputed values for $\alpha'(z_0), \beta'(z_0)$ etc. Finally, we solve for $A_i$ so that Legendre's relation may be applied to obtain a series of the form \eqref{solved}. The result, after tidying up, is \eqref{thm1state} (where we have replaced the Pochhammer symbols by binomial coefficients).

We now look at the convergence. From the standard asymptotics for the Legendre polynomials, we have, as $n \to \infty$,
\[ P_n(x) =O\bigl( \bigl(|x|+\sqrt{x^2-1}\bigr)^n\bigr) \ \mathrm{for} \ |x|>1 \ \mathrm{and} \ P_n(x) =O\bigl( n^{-1/2}\bigr) \ \mathrm{for} \ |x|\le 1. \]
Therefore, for any rational $k \in (0,1)$, the sum in \eqref{thm1state} converges geometrically, where the rate is given by 
\[ \frac{1-2k+6k^3-k^4}{(1+k)^2}+4\biggl(\frac{k(1-k)}{1+k}\biggr)^{3/2}. \]
Note that this is a convex function in $k$ with minimum at $(\sqrt2-1, 8(\sqrt2-1)^3)$ and maxima at $(0,1)$ and $(1,1)$.
\end{proof}

Note that any rational choice of $k \in (0,1)$ leads to a \textit{rational} series in Theorem \ref{thm1leg}, which is indicative that such series are likely to be fundamentally different from ones that are entirely modular in nature (see e.g.~\cite{CWZ}), whose arguments are much more restricted. For instance, with the choice of $k=1/2$ in Theorem \ref{thm1leg}, we get
\[ \sum_{n=0}^\infty \binom{2n}{n}^2 P_n\biggl(\frac{11}{5}\biggr)\biggl(\frac{5}{576}\biggr)^n(14-171n-4452n^2+2116n^3) = \frac{2160}{\pi}, \]
while with $k=2/3$, we have
\[ \sum_{n=0}^\infty \binom{2n}{n}^2 P_n\biggl(\frac{101}{91}\biggr)\biggl(\frac{91}{3600}\biggr)^n(5537 + 11304 n - 173328n^2 + 53824 n^3) = \frac{87750}{\pi}. \]

Theorem \ref{thm1leg} is by no means the unique consequence of \eqref{braf1} with $s=1/2$. For example, we can apply quadratic transformations to both arguments on the right hand side of \eqref{braf1}. The result is also a rational series, convergent for $k \in (0,1)$ and genuinely different from Theorem \ref{thm1leg}, though the general formula is too messy to be exhibited here. We give only one instance (with the choice $k=1/2$) here:
\[ \sum_{n=0}^\infty \binom{2n}{n}^2 P_n\biggl(\frac{19}{13}\biggr)\biggl(\frac{65}{20736}\biggr)^n(97756868 n^3 - 24254580 n^2 - 539415n - 264590)= \frac{6065280}{\pi}. \]
As another example, if we apply to one term in \eqref{braf1} a cubic transformation (corresponding to the rational parametrisation of the cubic modular equation), 
\begin{equation} \label{cubicm}
K\biggl(\frac{p^{1/2}(2+p)^{3/2}}{(1+2p)^{3/2}}\biggr) =(1+2p)\, K\biggl(\frac{p^{3/2}(2+p)^{1/2}}{(1+2p)^{1/2}}\biggr),
\end{equation}
then after a lot of work it is possible to obtain a general, rational series convergent for $p \in (0,1)$. At $p=1/2$ for instance, we get the series
\[ \sum_{n=0}^\infty \binom{2n}{n}^2 P_n\biggl(\frac{353}{272}\biggr)\biggl(\frac{17}{2^{11}}\biggr)^n (44100 n^3 - 30420 n^2 - 1559n - 206)= \frac{8704}{\pi}. \]
However, it is important to note that not all transformations lead to series of type \eqref{solved}.  \\

We give another general theorem for the $s=1/2$ case here. Recall that one of Euler's hypergeometric transformations leads to
\begin{equation}\label{eulert}
K(x) = \frac{1}{\sqrt{1-x^2}}\, K\biggl(\sqrt{\frac{x^2}{x^2-1}}\biggr).
\end{equation}
If we apply a quadratic transformation to one argument of \eqref{braf1} and Euler's transformation \eqref{eulert} to the other, the result is also rational series with at most a quadratic surd on the right hand side.  Once again convergence is easy to establish (the rate is $|z_0| = (1+k)(4k^2-3k+1)/(4k)$), and the general solution recorded below is proven in exactly the same way as Theorem \ref{thm1leg}.

\begin{thm} \label{thm2leg} For $k \in \bigl(\frac{\sqrt{41}-5}{8},1\bigr)$,
\small
\begin{align}
& \nonumber \sum_{n=0}^\infty \binom{2n}{n}^2 P_n\biggl(\frac{1-3k+2k^2-2k^3}{4k^2-3k+1}\biggr) \biggl(\frac{-(1+k)(4k^2-3k+1)}{64k}\biggr)^n \bigl(C_3 n^3 + C_2 n^2+C_1 n + C_0\bigr) \\
& = \frac{8k^{3/2}(4k^2-3k+1)}{\pi},
\end{align}
where
\begin{align*}
C_3 & = \frac{4(k-1)^2}{k+1}(2k-1)(4k^2+3k+1)^2, \\
C_2 & = 12(k-1)(2k-1)(16k^4+k^2-1), \\
C_1 & = 288k^6-400k^5+102k^4+97k^3-93k^2+47k-9, \\
C_0 & = 2 (32k^6-44k^5+9k^4+16k^3-14k^2+6k-1).
\end{align*}
\normalsize
\end{thm}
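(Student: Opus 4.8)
The plan is to rerun the argument behind Theorem \ref{thm1leg} with a different pair of transformations applied to Brafman's formula \eqref{braf1} at $s=1/2$, as the discussion preceding the statement anticipates. I start from the $s=1/2$ case of \eqref{braf1}, $\frac{\pi^2}{4}\sum_{n\ge0}\frac{(\frac12)_n^2}{n!^2}P_n(x)z^n=K(\sqrt{\alpha})\,K(\sqrt{\beta})$, with $\alpha,\beta,\rho$ as defined there. To one factor I apply the ascending Landen (quadratic) transformation already used in \eqref{braf2}, $K(\sqrt{\alpha})=\frac{1}{1+\sqrt{\alpha}}\,K\bigl(\frac{2\alpha^{1/4}}{1+\sqrt{\alpha}}\bigr)$, and to the other I apply Euler's transformation \eqref{eulert}, $K(\sqrt{\beta})=\frac{1}{\sqrt{1-\beta}}\,K\bigl(\sqrt{\beta/(\beta-1)}\bigr)$. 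Absorbing the two algebraic prefactors into an analytic function $G(z)$, this casts the identity in the form \eqref{key}, $\pi^2 G(z)=K(a(z))\,K(b(z))$ with $a(z)=\frac{2\alpha(z)^{1/4}}{1+\sqrt{\alpha(z)}}$ and $b(z)=\sqrt{\beta(z)/(\beta(z)-1)}$.

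Next I verify the degeneracy condition. Setting $x=\frac{1-3k+2k^2-2k^3}{4k^2-3k+1}$ and $z_0=\frac{-(1+k)(4k^2-3k+1)}{4k}$ (so that, using $\frac{(\frac12)_n^2}{n!^2}=\binom{2n}{n}^2/16^n$, the geometric ratio is $z_0/16=\frac{-(1+k)(4k^2-3k+1)}{64k}$ as displayed), I substitute into $\rho,\alpha,\beta$ and check by direct simplification that $a(z_0)^2=1-b(z_0)^2$, so the right side of \eqref{key} becomes $K(a(z_0))K'(a(z_0))$; I expect $a(z_0)$ and $b(z_0)$ to be algebraic in $k$ with a single $\sqrt{k}$ surviving, which is the source of the $k^{3/2}$ in the final formula. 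I then compute and simplify the derivatives $a'(z_0),a''(z_0),a'''(z_0)$, $b'(z_0),b''(z_0),b'''(z_0)$ and the corresponding derivatives of the prefactor in $G$.

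With these values in hand the mechanism of \eqref{diff3}--\eqref{solved} applies verbatim: differentiate $\pi^2 G(z)=K(a(z))K(b(z))$ up to three times, form $A_0 G+A_1 G'+A_2 G''+A_3 G'''$, expand the right-hand derivatives using $\frac{\mathrm d}{\mathrm d x}K(x)=\frac{E(x)}{x(1-x^2)}-\frac{K(x)}{x}$ and $\frac{\mathrm d}{\mathrm d x}E(x)=\frac{E(x)-K(x)}{x}$ with the chain rule, and evaluate at $z_0$ to get $B_0 KK'(z_0)+B_1 EK'(z_0)+B_2 E'K(z_0)+B_3 EE'(z_0)$ with the $B_i$ explicit (if cumbersome) linear-in-$A_i$ expressions in $k$. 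Imposing $B_0=-1$, $B_1=B_2=1$, $B_3=0$ is a $4\times 4$ linear system for $(A_0,A_1,A_2,A_3)$, nonsingular because $G$ satisfies an ODE of degree at least $4$ --- exactly the hypothesis demanded in the setup around \eqref{key} --- so that $G,G',G'',G'''$ carry four independent degrees of freedom. Solving the system, applying Legendre's relation \eqref{legendrer} to collapse the right side to $\frac{\pi}{2}$, and clearing a common denominator turns $\sum_i A_i G^{(i)}(z_0)=\frac1{2\pi}$ into the asserted identity once $G$ is rewritten as its power series: the falling factorials produced by the $z$-derivatives recombine with the $A_i$ and the prefactor values into the cubic $C_3 n^3+C_2 n^2+C_1 n+C_0$.

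It remains to handle convergence and to flag where the effort lies. One checks that $x=\frac{1-3k+2k^2-2k^3}{4k^2-3k+1}$ equals $1$ only at $k\in\{0,-1\}$ and equals $-1$ only at $k=1$, so $x\in(-1,1)$ throughout $k\in(0,1)$, the Legendre polynomials are then $O(n^{-1/2})$, and the series converges exactly when $|z_0|<1$. Since $\frac{(1+k)(4k^2-3k+1)}{4k}-1=\frac{(k-1)(4k^2+5k-1)}{4k}$, on $(0,1)$ this holds iff $4k^2+5k-1>0$, i.e.\ iff $k>\frac{\sqrt{41}-5}{8}$, which is the stated range. The real obstacle is the volume of symbolic bookkeeping: propagating the radicals $\alpha^{1/4}$ and $\sqrt{\beta/(\beta-1)}$ through three differentiations, collapsing everything at $z_0$ to closed form in $k$ while correctly tracking the lone $\sqrt{k}$, and then solving and simplifying the $4\times 4$ system so that $C_0,\dots,C_3$ emerge in the compact, partly factored shape shown. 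All of this is best delegated to a computer algebra system, and no new idea is needed beyond the derivation of \eqref{solved}.
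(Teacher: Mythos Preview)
Your proposal is correct and follows exactly the approach the paper indicates: apply the quadratic transformation to one factor of Brafman's formula and Euler's transformation \eqref{eulert} to the other, then run the machinery of \eqref{diff3}--\eqref{solved} as in Theorem~\ref{thm1leg}. Your convergence analysis (checking $|x|<1$ on $(0,1)$ and reducing $|z_0|<1$ to $4k^2+5k-1>0$) is in fact more explicit than the paper's one-line remark that the rate is $|z_0|=(1+k)(4k^2-3k+1)/(4k)$.
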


Examples include
\[  \sum_{n=0}^\infty \binom{2n}{n}^2 P_n\biggl(\frac13\biggr) \biggl(\frac{-1}{36}\biggr)^n (1-3n-84n^2-121n^3) = \frac{18\sqrt3}{\pi}, \]
from $k=1/3$, and when $k=1/2$ (chosen so that $C_3$ vanishes),
\begin{equation} \label{weird}
 \sum_{n=0}^\infty \binom{2n}{n}^2 P_n\biggl(\frac12\biggr) \biggl(\frac{3}{128}\biggr)^n (3+14n) = \frac{8\sqrt2}{\pi}.
\end{equation}
The formula \eqref{weird} is particularly interesting, because although it fits the form of the $1/\pi$ series considered in \cite{CWZ} perfectly, it cannot be explained by the general theory of \cite{CWZ} (in the notation used there, its $\tau_0$ is $iK(\sqrt3/2)/(2K(1/2))$, which is not a quadratic irrationality).\\

Just as in \cite{CWZ}, we can produce `companion series' using Legendre's relation; one example is
\begin{align*} 
\sum_{n=0}^\infty \binom{2n}{n}^2 \Bigl(\frac{3}{128}\Bigr)^n & \biggl[14n(196n^2+196n-3)P_{n-1}\Bigl(\frac12\Bigr) \\
& -(1372n^3+3024n^2+1631n+375)P_n\Bigl(\frac12\Bigr)\biggr]  = \frac{400\sqrt2}{\pi}.
\end{align*}

\begin{remark} \label{rmk1leg}
One might wonder what happens if we set $a(z)=b(z)$ in \eqref{key}. In the case of \eqref{braf2}, as the quadratic transformation is effectively the degree 2 modular equation, any series thus produced would be subsumed under the theory in \cite{CWZ} with the choice $N=2$, and where $\sqrt{\beta}$ could be taken as a singular value. 

If we applied Euler's transformation \eqref{eulert} followed by a quadratic transformation to one of the terms in \eqref{braf1}, however, we arrive at a class of series not explicitly studied in \cite{CWZ} (in the notations used there, the relationship is $\alpha = t_4(1/2+\tau_0)$, $\beta = t_4(\tau_0/2)$). A rational example of such a series is
\begin{equation}
\sum_{n=0}^\infty \binom{2n}{n}^2 P_n\biggl(\frac{2\sqrt2}{3}\biggr)\biggl(\frac{3\sqrt2}{128}\biggr)^n (6n+1) = \frac{2\sqrt{8+6\sqrt2}}{\pi}.
\end{equation}
It is interesting to note that this series has the same $x$ and $z$ as, but is different from, Theorem \ref{thm2leg} with $k=1/\sqrt{2}$. Similarly, with $k=(3-i\sqrt7)/8$ in Theorem  \ref{thm1leg}, we get
\begin{equation}
\sum_{n=0}^\infty \binom{2n}{n}^2 P_n\biggl(\frac{i}{3\sqrt7}\biggr)\biggl(\frac{3i\sqrt7}{256}\biggr)^n (900n^3-564n^2-39n-14) = \frac{384}{\pi},
\end{equation}
which is quite similar to entry (I1) in \cite{CWZ} (first conjectured by Sun); the only difference being that the polynomial term is $16(30n+7)$ in the latter sum.
This phenomenon ultimately stems from the fact that the same (modular) transformations are being used. See also Section \ref{secbraf3} for more discussions. \qed
\end{remark}

\medskip

\subsection{The $s=1/4$ case}

Even though equation \eqref{braf1} holds for $s \in (0,1)$, we see in the last two theorems that transformations need to be applied to the right hand side of \eqref{braf1} before Legendre's relation can be used. Since many such transformations are modular in nature, we are again confined to $s \in \{1/2, 1/3, 1/4,1 /6\}$.
We now consider the $s=1/4$ case in \eqref{braf1}. One strategy here is to transform the right hand side of \eqref{braf1} in terms of $K$; the transformation required is
\[_2F_1\biggl({{\frac14,\frac34}\atop 1};x^2\biggr) = \frac{1}{\sqrt{1+x}} \, _2F_1\biggl({{\frac12,\frac12}\atop 1}; \frac{2x}{1+x}\biggr). \]
The transformed expression is of type \eqref{key} and we solve for $a(z_0)^2 = 1- b(z_0)^2$ in the notation there. Proceeding along the same lines as in the proof of Theorem \ref{thm1leg}, the following theorem can then be established:

\begin{thm} For $k \in (0,1)$, \label{thm3leg} \small
\begin{align} \nonumber
&  \sum_{n=0}^\infty \frac{(\frac14)_n(\frac34)_n}{n!^2} P_n\biggl(\frac{(1+k)(1-4k+7k^2)}{(1-3k)(1+3k^2)}\biggr) \biggl(\frac{(1+k)(1-3k)(1+3k^2)}{(1+3k)^2}\biggr)^n  \\ 
& \quad \times\bigl(C_3 n^3 + C_2 n^2+C_1 n + C_0\bigr) = \frac{3\sqrt2 (1+3k)^{5/2}(1+3k^2)}{(1+k)\pi}, \label{s14c}
\end{align} 
where
\begin{align*}
C_3 & = \frac{16(k-1)^2k^2}{(1+k)^2}(8+15k+9k^2)^2, \\
C_2 & = 48(k-1)k(8-15k+27k^2+27k^3+81k^4), \\
C_1 & = (4-33k+45k^2)(4-17k+17k^2-3k^3+63k^4), \\
C_0 & = 3(1-3k)^4(1+k+2k^2).
\end{align*}
\normalsize
\end{thm}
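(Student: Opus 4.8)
The plan is to feed Brafman's formula \eqref{braf1} at $s=\tfrac14$ into the general construction of Section~2 (the passage from \eqref{key} to \eqref{solved}). First I would apply the hypergeometric transformation displayed just before the theorem statement to each of the two ${}_2F_1$'s on the right-hand side of \eqref{braf1}; writing $u=\sqrt{\alpha}$ and $v=\sqrt{\beta}$ this recasts Brafman's identity as an instance of \eqref{key},
\[ \pi^2 G(z)=K\bigl(a(z)\bigr)\,K\bigl(b(z)\bigr),\qquad a(z)=\sqrt{\tfrac{2u}{1+u}},\quad b(z)=\sqrt{\tfrac{2v}{1+v}}, \]
with $G(z)=\tfrac14\sqrt{(1+u)(1+v)}\,\sum_{n\ge0}\frac{(\frac14)_n(\frac34)_n}{n!^2}P_n(x)z^n$, where $u,v$ and $\rho$ are the functions of $z$ and of the free parameter $x$ appearing in \eqref{braf1}. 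The series defining $G$, stripped of its algebraic prefactor, satisfies a fourth-order linear ODE in $z$, so the degree hypothesis before \eqref{key} is met and we are in the situation of \eqref{diff3}.

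Next I would locate $z_0$ with $a(z_0)^2=1-b(z_0)^2$. A short computation shows that $a^2+b^2=1$ is equivalent to $u+v+3uv=1$; combining this with $u^2+v^2=1-\rho$, $\ v^2-u^2=z$, $\ u^2v^2=\tfrac12(1-\rho-xz)$ and eliminating $\rho$ yields a one-parameter family of admissible pairs $(x,z_0)$, which — reverse-engineered from $u+v+3uv=1$ — is precisely the claimed parametrisation $x=\frac{(1+k)(1-4k+7k^2)}{(1-3k)(1+3k^2)}$, $z_0=\frac{(1+k)(1-3k)(1+3k^2)}{(1+3k)^2}$; for these choices $\rho(z_0),u(z_0),v(z_0)$, hence $a(z_0)^2$ and its complement, are rational in $k$. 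I would then compute $\rho',\rho'',\rho'''$ at $z_0$ in closed form (hence $\alpha^{(j)},\beta^{(j)}$, hence $a^{(j)}(z_0),b^{(j)}(z_0)$ for $j\le 3$), and using $\frac{d}{dx}K=\frac{E-(1-x^2)K}{x(1-x^2)}$, $\frac{d}{dx}E=\frac{E-K}{x}$ together with $K(b(z_0))=K'(a(z_0))$ and $E(b(z_0))=E'(a(z_0))$, express each of $\frac{d^i}{dz^i}(\pi^2 G)\big|_{z_0}$ for $i=0,\dots,3$ as a $\mathbb{Q}(k)$-combination of $KK'$, $EK'$, $E'K$, $EE'$ at argument $a(z_0)$.

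Then I would form the linear combination as in \eqref{diff3}, solve the (generically nonsingular) $4\times4$ system $B_0=-1$, $B_1=B_2=1$, $B_3=0$ for $A_0,\dots,A_3\in\mathbb{Q}(k)$, and apply Legendre's relation \eqref{legendrer} to reach $\sum_{i=0}^3 A_iG^{(i)}(z_0)=\tfrac1{2\pi}$, as in \eqref{solved}. Since $x$ is constant once $k$ is fixed, $S(z):=\sum_n\frac{(\frac14)_n(\frac34)_n}{n!^2}P_n(x)z^n$ satisfies $S^{(j)}(z_0)=\sum_n\frac{(\frac14)_n(\frac34)_n}{n!^2}P_n(x)\tfrac{n!}{(n-j)!}z_0^{\,n-j}$, so absorbing the $z$-derivatives of the algebraic prefactor via the Leibniz rule turns $\sum_iA_iG^{(i)}(z_0)$ into $\sum_n\frac{(\frac14)_n(\frac34)_n}{n!^2}P_n(x)z_0^n\,(\,C_3n^3+C_2n^2+C_1n+C_0\,)$ up to a common rational factor, and clearing that factor produces the right-hand side $\frac{3\sqrt2(1+3k)^{5/2}(1+3k^2)}{(1+k)\pi}$. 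Convergence for $k\in(0,1)$ follows exactly as in Theorem~\ref{thm1leg}, from $\frac{(\frac14)_n(\frac34)_n}{n!^2}=O(1/n)$ and $P_n(x)=O\bigl((|x|+\sqrt{x^2-1})^n\bigr)$ for $|x|>1$, $O(n^{-1/2})$ for $|x|\le1$, after checking that the resulting geometric rate stays below $1$ on the whole interval.

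The main obstacle is purely computational rather than conceptual: carrying the closed forms of $\rho$ and its derivatives at $z_0$ through several chain- and Leibniz-rule steps, solving the $4\times4$ system over $\mathbb{Q}(k)$, and then collapsing the outcome to the compact polynomials $C_0,\dots,C_3$ essentially demands a computer algebra system. One also has to keep track of the branch of $\sqrt{1-3k}$ — note that $z_0$ vanishes and $x$ blows up at $k=\tfrac13$, with the product $P_n(x)z_0^n$ nevertheless staying finite there.
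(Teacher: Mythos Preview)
Your proposal is correct and follows essentially the same route as the paper: apply the quadratic transformation displayed before the theorem to both ${}_2F_1$'s in Brafman's formula at $s=\tfrac14$, solve $a(z_0)^2+b(z_0)^2=1$ (equivalently $u+v+3uv=1$) to obtain the stated rational parametrisation in $k$, and then carry out the linear-algebra step \eqref{diff3}--\eqref{solved} exactly as in the proof of Theorem~\ref{thm1leg}. Your closing remark about a ``branch of $\sqrt{1-3k}$'' is a small slip---no such surd appears; the relevant transition at $k=\tfrac13$ is simply the sign change of $z_0$ together with the pole of $x$, and the limit there is precisely what produces \eqref{ramaleg1}.
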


An example of an identity produced by Theorem \ref{thm3leg} is
\[ \sum_{n=0}^\infty \frac{(\frac14)_n(\frac34)_n}{n!^2} P_n\biggl(\frac{9}{7}\biggr)\biggl(\frac{21}{100}\biggr)^n(216-2385n-108432n^2+80656n^3) = \frac{12600\sqrt{5}}{\pi}. \]
Note that we may also choose $k$ for the right hand side of \eqref{s14c} to be rational.

We can perform a trick here: if the denominator of the argument in $P_n$ is 0 at some $k_0$, and at the same time the geometric term $z_0$ vanishes, when we may take the limit $k \mapsto k_0$ which gets rid of the Legendre polynomial altogether (note that the leading coefficient of $P_n$ is $\binom{2n}{n}2^{-n}$). In \eqref{s14c}, this occurs when $k_0=1/3$. After taking the limit and eliminating the $n^3$ term using a differential equation, we recover the Ramanujan series (of the type \eqref{rama})
\begin{equation} \label{ramaleg1}
\sum_{n=0}^\infty\frac{(\frac14)_n(\frac12)_n(\frac34)_n}{n!^3}\biggl(\frac{32}{81}\biggr)^n (1+7n) =\frac{9}{2\pi}. \end{equation}
The same trick, applied to the series which following from the cubic transformation \eqref{cubicm} mentioned in the $s=1/2$ case, results in
\begin{equation}\label{ramaleg3}
\sum_{n=0}^\infty\frac{(\frac12)_n^3}{n!^3}\biggl(\frac{1}{4}\biggr)^n (1+6n) =\frac{4}{\pi},
\end{equation}
from the choice $p=(\sqrt3-1)/2$; this formula originated from Ramanujan \cite{Ra} and was first proven by Chowla.

\subsection{The $s=1/3$ case}

This case is slightly trickier. An attempt to transform the right hand side of \eqref{braf1} in terms of $K$, as we did for the $s=1/4$ case, results in exceedingly messy computations. Applying low degree modular equations to one of the $_2F_1$'s (as we did in the $s=1/2$ case, for \eqref{braf2} essentially uses the degree 2 modular equation) does not give convergent series. Instead, we resort to a formula in \cite{goursat},
\[ _2F_1\biggl({{\frac13,\frac23}\atop 1};x\biggr) = (1+8x)^{-1/4} \, _2F_1\biggl({{\frac16,\frac56}\atop 1};\frac12-\frac{1-20x-8x^2}{2(1+8x)^{3/2}}\biggr), \]
to transform the right hand side of \eqref{braf1}, then solve for $a(z_0)^2 = 1- b(z_0)^2$ in the notation of \eqref{key}, followed by applying the generalized Legendre relation \eqref{legendregen} with $s=1/3$.
We succeed in obtaining the following theorem, where $\alpha(z_0) = k^3, \, \beta(z_0) = \bigl(\frac{1-k}{1+2k}\big)^3$, and the rate of convergence is $(1+k+k^2)(1-2k+4k^2)^2/(1+2k)^3$.

\begin{thm} \label{thm4leg} For $k \in (0,1)$,
\small
\begin{align} \nonumber
&  \sum_{n=0}^\infty \frac{(\frac13)_n(\frac23)_n}{n!^2} P_n\biggl(\frac{1-4k+6k^2-4k^3+10k^4}{(1-2k-2k^2)(1-2k+4k^2)}\biggr)\biggl(\frac{(1+k+k^2)(1-2k-2k^2)(1-2k+4k^2)}{(1+2k)^3}\biggr)^n \\
& \quad \times \bigl(C_3 n^3 + C_2 n^2+C_1 n + C_0\bigr)= \frac{\sqrt{3} \,(1+2k)^4(1-2k+4k^2)}{\pi}, 
\end{align}
where
\begin{align*}
C_3 & = \frac{9(k-1)^2k^2}{1+k+k^2}(3+4k+2k^2)^2 (3+2k+4k^2)^2, \\
C_2 & = 27(k-1)k(9-18k+10k^2+12k^3+60k^4+160k^5+240k^6+192k^7+64k^8), \\
C_1 & = 9-144k+540k^2-584k^3+314k^4-228k^5-1256k^6-1072k^7+768k^8+2560k^9+1280k^{10}, \\
C_0 & = 2(1-2k-2k^2)^2(1-10k+12k^2-24k^3+16k^4+32k^6).
\end{align*}
\normalsize
\end{thm}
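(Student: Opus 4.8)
The plan is to mirror the proof of Theorem \ref{thm1leg}, taking Brafman's formula \eqref{braf1} at $s=1/3$ as the starting point and replacing Legendre's relation \eqref{legendrer} by its generalisation \eqref{legendregen} at $s=1/3$. Since $K_{1/3}(x)=\frac{\pi}{2}\,{}_2F_1\bigl({\frac16,\frac56\atop 1};x^2\bigr)$, the first step is to express the right-hand side of \eqref{braf1} in terms of $K_{1/3}$: applying the transformation of \cite{goursat},
\[ {}_2F_1\biggl({{\frac13,\frac23}\atop 1};x\biggr)=(1+8x)^{-1/4}\,{}_2F_1\biggl({{\frac16,\frac56}\atop 1};\frac12-\frac{1-20x-8x^2}{2(1+8x)^{3/2}}\biggr), \]
to \emph{both} hypergeometric factors recasts \eqref{braf1} as a factorisation of type \eqref{key}, namely $\pi^2 G(z)=K_{1/3}(a(z))\,K_{1/3}(b(z))$, where $a(z)^2=\frac12-\frac{1-20\alpha-8\alpha^2}{2(1+8\alpha)^{3/2}}$ and $b(z)^2$ is the same expression with $\beta$ in place of $\alpha$ (here $\alpha,\beta,\rho$ are the algebraic functions of $z$ from \eqref{braf1}, and $x$ is a parameter yet to be fixed), while $G(z)=\tfrac14(1+8\alpha)^{1/4}(1+8\beta)^{1/4}\sum_{n\ge0}\tfrac{(\frac13)_n(\frac23)_n}{n!^2}P_n(x)z^n$ is analytic at the origin and satisfies an ODE of order at least $4$, as required in \eqref{key}.

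Next I would impose $a(z_0)^2=1-b(z_0)^2$. Clearing the radicals turns this into a polynomial relation between $x$ and $z$, and a rational parametrisation of it is precisely the pair $(x,z_0)$ displayed in the statement, for which $\alpha(z_0)=k^3$ and $\beta(z_0)=\bigl(\tfrac{1-k}{1+2k}\bigr)^3$. Using the sum-of-cubes identities $1+8k^3=(1+2k)(1-2k+4k^2)$ and $1+8\beta(z_0)=9(1-2k+4k^2)/(1+2k)^3$, one checks that $a(z_0)^2+b(z_0)^2=1$, so at $z=z_0$ the two copies of $K_{1/3}$ have complementary moduli and their product becomes $K_{1/3}K_{1/3}'$. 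I would also compute, once and for all, $a'(z_0),a''(z_0),a'''(z_0)$ and $b'(z_0),b''(z_0),b'''(z_0)$ by the chain rule from the (known) derivatives of $\alpha,\beta,\rho$ at $z_0$, simplifying each to a rational function of $k$.

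Then, as in \eqref{diff3}, I would form the combination $A_0G(z_0)+A_1G'(z_0)+A_2G''(z_0)+A_3G'''(z_0)$ and, using the hypergeometric ODE for $K_{1/3}$ (so that each $\tfrac{d^j}{dz^j}K_{1/3}(a(z))$ is a combination of $K_{1/3}(a(z))$ and $E_{1/3}(a(z))$ with coefficients rational in $a,a',a'',a'''$, and similarly for $b$), rewrite it at $z_0$ as $B_0K_{1/3}K_{1/3}'+B_1E_{1/3}K_{1/3}'+B_2E_{1/3}'K_{1/3}+B_3E_{1/3}E_{1/3}'$, with the $B_i$ linear in the $A_i$ and the $4\times4$ coefficient matrix assembled from the precomputed data (invertible thanks to the order-$\ge4$ ODE for $G$, cf.\ the discussion following \eqref{key}). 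Solving $B_0=-1$, $B_1=B_2=1$, $B_3=0$ for $A_0,\dots,A_3$ and applying \eqref{legendregen} at $s=1/3$ — whose right side equals $\frac{\pi}{2}\cdot\frac{\cos(\pi/3)}{1+2/3}=\frac{3\pi}{20}$ — collapses the right-hand side to a constant multiple of $1/\pi$. Substituting the power series for $G$, reading off the coefficient of $z_0^n$, and clearing the algebraic prefactors (in $\alpha,\beta$ and then in $k$) yields $\sum_n\frac{(\frac13)_n(\frac23)_n}{n!^2}P_n(x_0)z_0^n(C_3n^3+C_2n^2+C_1n+C_0)=\frac{\sqrt3\,(1+2k)^4(1-2k+4k^2)}{\pi}$; a long but mechanical (CAS-assisted) simplification identifies the $C_i$ with the polynomials in the statement. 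Finally, convergence is settled exactly as in Theorem \ref{thm1leg}: the asymptotics $P_n(x_0)=O\bigl((|x_0|+\sqrt{x_0^2-1})^n\bigr)$ for $|x_0|>1$ give the geometric rate $(1+k+k^2)(1-2k+4k^2)^2/(1+2k)^3$, which is $<1$ for $k\in(0,1)$.

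I expect the main obstacle to be the choice of transformation and parametrisation, not the final calculation. As the text notes, transforming the right side of \eqref{braf1} directly to $K$ produces unwieldy expressions and low-degree modular equations give divergent series, so the real content is recognising that this particular Goursat transformation, applied symmetrically, yields a factorisation for which \emph{both} the complementary-modulus equation $a(z_0)^2=1-b(z_0)^2$ has a clean rational solution \emph{and} the resulting linear system for the $A_i$ is consistent — not every transformation leads to a series of type \eqref{solved}. Everything after that — differentiating the Goursat maps three times, simplifying under the $k$-parametrisation, inverting the $4\times4$ system, and compressing the answer into the compact $C_i$ — is heavy but routine.
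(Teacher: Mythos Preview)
Your outline is correct and matches the paper's own approach essentially step for step: apply the Goursat transformation to both hypergeometric factors of \eqref{braf1} at $s=1/3$ to obtain a $K_{1/3}\cdot K_{1/3}$ factorisation, choose the rational parametrisation with $\alpha(z_0)=k^3$, $\beta(z_0)=\bigl(\tfrac{1-k}{1+2k}\bigr)^3$ so the moduli are complementary, solve the $4\times4$ system from \eqref{diff3}, and invoke the generalised Legendre relation \eqref{legendregen} at $s=1/3$. Your supplementary details (the verification $1+8\alpha(z_0)=(1+2k)(1-2k+4k^2)$, the value $3\pi/20$, and the convergence check) are all correct and flesh out what the paper leaves implicit.
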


Note that in this case the right hand side contains a surd for rational $k$.  When $k \to (\sqrt3-1)/2$, we get the series
\[  \sum_{n=0}^\infty \frac{(\frac13)_n(\frac12)_n(\frac23)_n}{n!^3}\biggl(\frac{3(7\sqrt3-12)}{2}\biggr)^n \bigl(5-\sqrt3+22n\bigr) = \frac{7+3\sqrt3}{\pi}. \]

\subsection{The $s=1/6$ case}

It is also possible to produce a general series for this case, though the details are formidable and require hours of computer algebra. The derivation is similar to the $s=1/3$ case, and we use Goursat's result \cite{goursat}
\[ _2F_1\biggl({{\frac16,\frac56}\atop 1};\frac12-\frac12 \sqrt{1-\frac{64(1-t)t^3}{(9-8t)^3}}\biggr) = \Bigl(1-\frac{8t}{9}\Bigr)^{\frac14} \, _2F_1\biggl({{\frac13,\frac23}\atop 1};t\biggr), \]
followed by the generalized Legendre relation for $s=1/6$.

The general result for $s=1/6$ is too lengthy to be included here, though in essence its derivation is similar to that of Theorem \eqref{thm1leg} (but with more liberal use of the chain rule). We will only remark on some of its features below. Just to find suitable $x$ (in $P_n$) and $z_0$, we need to solve
\[ \alpha(z_0) = \frac12-\frac12 \sqrt{1-\frac{64(1-t)t^3}{(9-8t)^3}}, \ \beta(z_0) = \frac12-\frac12 \sqrt{1-\frac{64t(1-t)^3}{(1+8t)^3}}, \]
and for both $x$ and $z_0$ to admit rational parametrisations is equivalent to having $1+8t$ and $9-8t$ both as rational squares -- that is, we require a parametrised solution for rational points on the curve $u^2+v^2=10$. Having done so, the resulting series converges for $k \in (1/3, 1)$ where $k$ is the aforementioned parameter; the coefficient of $n$ alone is a degree 24 polynomial in $k$.  Even for $k=1/2$, large integers are involved:
\begin{align*}
& \sum_{n=0}^\infty \frac{(\frac16)_n(\frac56)_n}{n!^2} P_n\biggl(\frac{2437}{2365}\biggr)\biggl(\frac{15136}{296595}\biggr)^n\Bigl(710512440561n^3-118714528800n^2 \\
& \quad -19263658756n-2627089880\Bigr) = \frac{1402894350\sqrt{39}}{\pi}.
\end{align*}
With the limit $k \to (\sqrt5-1)/2$, however, we recover the Ramanujan series
\begin{equation} \label{ramaleg2}
\sum_{n=0}^\infty\frac{(\frac16)_n(\frac12)_n(\frac56)_n}{n!^3}\biggl(\frac{4}{125}\biggr)^n (1+11n) =\frac{5\sqrt{15}}{6\pi}. \end{equation}
In the general series, the $1/\pi$ side is actually the square root of a quartic in $k$, and hence rational points on it may be found by the standard process of converting it to a cubic elliptic curve (namely, $y^2=62208 + 3312x - 144x^2 + x^3$). It follows that there are infinitely many rational solutions. The smallest solution for $k$ (in terms of the size of the denominator) which admits a rational right hand side is $k=6029/8693$, and the resulting series involves integers of over 100 digits.
We include the series in Appendix \ref{verybig} for amusement.

\medskip

\subsection{Rarefied Legendre polynomials}

Factorisations of the type \eqref{key} for generating functions of rarefied Legendre polynomials
\[ \sum_{n=0}^\infty \frac{(\frac12)_n^2}{n!^2} P_{2n}(x)z^{2n}, \ \mathrm{and} \ \sum_{n=0}^\infty \frac{(\frac13)_n(\frac23)_n}{n!^2} P_{3n}(x)z^{3n} \]
are given in \cite{WZ}. Using standard partial differentiation techniques, we may also use Legendre's relation to deduce parameter-dependent rational series for them. The algebra is formidable and we do not present the general forms here; only two examples are given to demonstrate their existence:
\begin{align*}
& \sum_{n=0}^\infty \frac{(\frac12)_n^2}{n!^2}P_{2n}\biggl(\frac{91}{37}\biggr)\biggl(\frac{5}{37}\biggr)^{2n} (3108999168n^3-3255264000n^2-75508700n+24025) \\
& = \frac{896968800}{\pi}, \\
 & \sum_{n=0}^\infty \frac{(\frac13)_n(\frac23)_n}{n!^2}P_{3n}\biggl(\frac{19}{3\sqrt{33}}\biggr) \frac{39887347500n^3-6141658302n^2+172862917n-15262470}{(11\sqrt{33})^n} \\
& = \frac{442203651\sqrt{11}}{2\pi}.
\end{align*}
Under appropriate limits, the series involving $P_{2n}$ again gives \eqref{ramaleg1}, while the one for $P_{3n}$ recovers equation \eqref{ramaleg2}.

\section{Orr-type theorems}

\subsection{A result from Bailey or Brafman}

There are other formulas, notably ones of Orr-type, which satisfy \eqref{key}; an example was given by Bailey \cite[equation (6.3) or (7.2)]{bailey1}:
\begin{equation} 
_4F_3\biggl({{s,s,1-s,1-s}\atop{\frac12,1,1}}; \frac{-x^2}{4(1-x)}\biggr) = {_2F_1}\biggl({{s,1-s}\atop 1};x\biggr){_2F_1}\biggl({{s,1-s}\atop 1};\frac{x}{x-1}\biggr). 
\label{baileys} \end{equation}
Formula \eqref{baileys} is also record in \cite[equation (2.5.32)]{slater} (this reference contains a rich collection of Orr-type theorems). Specializing Bailey's result using $s=1/4$, we have
\begin{equation} \label{baileys1} 
\frac{\pi^2}{4} \, _4F_3\biggl({{\frac14,\frac14,\frac34,\frac34}\atop{\frac12,1,1}}; \frac{-4x^4(1-x^2)^2}{(1-2x^2)^2}\biggr) = K(x)K\biggl(\frac{x}{\sqrt{2x^2-1}}\biggr).
\end{equation}
This formula also follows from setting $x=0$, $s=1/2$ in Brafman's formula \eqref{braf1}. We try different transformations for the right hand side of \eqref{baileys1}, in order to find a suitable $z_0$ for which the two arguments are complementary, so the procedures leading up to \eqref{solved} may be applied. Indeed, after using Euler's transformation \eqref{eulert} to both terms followed by a quadratic transformation, we obtain the equivalent formulation
\[ \frac{\pi^2}{4} \sqrt{\frac{(1+z)(1+z')}{2z'}}\, _4F_3\biggl({{\frac14,\frac14,\frac34,\frac34}\atop{\frac12,1,1}}; \frac{z^4}{4(z^2-1)}\biggr) = K\biggl(\sqrt{\frac{2z}{z+1}}\biggr)K\biggl(\sqrt{\frac{z'-1}{z'+1}}\biggr), \]
where at $z_0=(-1)^{1/6}$ the arguments in the $K$'s are complementary (and correspond to argument $1/4$ in the $_4F_3$). Proceeding as we did for our previous results, Legendre's relation gives
\begin{equation} \label{guic1} \sum_{n=0}^\infty \binom{4n}{2n}^2 \binom{2n}{n} \frac{3+26n+48n^2-96n^3}{2^{12n}} = \frac{2\sqrt{2}}{\pi}. \end{equation}
This time we do not have a more general rational series depending on a parameter, since there is only one free variable $x$ in \eqref{baileys}. For other values of $z_0$, algebraic irrationalities are involved, for instance
\begin{align*}
& \sum_{n=0}^\infty \binom{4n}{2n}^2 \binom{2n}{n} \bigl(-16(62+33\sqrt2)n^3+24(2-\sqrt2)n^2+3(10+\sqrt2)n+3\bigr)\biggl(\frac{2-\sqrt{2}}{2^8}\biggr)^{2n} \\
&= \frac{4\sqrt{2+\sqrt{2}}}{\pi}, \\
& \sum_{n=0}^\infty \binom{4n}{2n}^2 \binom{2n}{n} \bigl(4(151+73\sqrt5)n^3-96(3+\sqrt5)n^2-(25-\sqrt5)n-3\bigr)\biggl(\frac{17\sqrt{5}-38}{2^6}\biggr)^n \\
&= \frac{38+17\sqrt5}{\pi}. 
\end{align*}

We note that it is routine to obtain results \textit{contiguous} to \eqref{baileys1}, i.e.~equations where the left hand side is a $_4F_3$ whose parameters differ from the left hand side of \eqref{baileys1} by some integers. It is known that the corresponding right hand side relates to that of \eqref{baileys1} by a suitable differential operator. Two such contiguous relations give elegant variations of \eqref{guic1}:
\begin{align} \nonumber
\sum_{n=0}^\infty \binom{4n}{2n}^2 \binom{2n}{n} \frac{1-48n^2}{(1-4n)^2 \, 2^{12n}}& = \frac{2\sqrt{2}}{\pi},\\
\sum_{n=0}^\infty \binom{4n}{2n}^2 \binom{2n}{n} \frac{3+32n+48n^2}{(1+2n) \, 2^{12n}} & = \frac{8\sqrt{2}}{\pi}, \label{guic2}
\end{align}
where the second sum has been proven in \cite[table 2]{JG2} using creative telescoping.

In fact, using the same argument $z_0$ as in \eqref{guic1}, we may invoke \eqref{baileys} instead of its specialisation \eqref{baileys1}, and appeal to the generalised Legendre relation. The result, and those contiguous to it, are rather neat and hold for any $s$ that ensures convergence:
\begin{align} \nonumber
& \sum_{n=0}^\infty \frac{(s)_n^2 (1-s)_n^2}{(\frac12)_n(1)_n^3} \frac{s(1-s)+2(1-s+s^2)n+3n^2-6n^3}{(1-2s)^2 \,4^n} \\ \nonumber
= & \sum_{n=0}^\infty \frac{(s)_n^2 (1-s)_n^2}{(\frac32)_n(1)_n^3} \frac{s(1-s)+2n+3n^2}{4^n} \\ 
= & \sum_{n=0}^\infty \frac{(s)_n^2 (-s)_n^2}{(\frac12)_n(1)_n^3} \frac{s^2-3n^2}{s \, 4^n} = \frac{\sin(\pi s)}{\pi}. \label{guic3}
\end{align}
These series generalise \eqref{guic1} and \eqref{guic2}. For rational $s$, the rightmost term in \eqref{guic3} is algebraic; e.g. for $s=1/6$ we get the rational series
\[ \sum_{n=0}^\infty \binom{6n}{4n}\binom{6n}{3n}\binom{4n}{2n} \frac{25-108n^2}{(6n-5)^2 \, 2^{8n} 3^{6n}} = \frac{3}{5\pi}.\]

\subsection{Another result due to Bailey}

We can take \cite[equation (6.1)]{bailey1} (or \cite[(2.5.31)]{slater}), from which we find
\begin{equation} \label{baileys2}
 \pi K\Bigl(\frac{1}{\sqrt2}\Bigr) \, _4F_3\biggl({{\frac14,\frac14,\frac14,\frac34}\atop{\frac12,\frac12,1}}; 16x^2 x'^2 (x'^2-x^2)^2\biggr) = \bigl(K(x)+K'(x)\bigr)K\biggl(\sqrt{\frac12-xx'}\biggr).
\end{equation}
To prepare this identity for Legendre's relation so as to produce even just one rational series, we need to do  more work than we did for \eqref{baileys1}.

We apply the cubic modular equation \eqref{cubicm} to the rightmost term in \eqref{baileys2}. Denoting the $_4F_3$  in \eqref{baileys2} by $G$, we have
\begin{align*}
&   \pi K\Bigl(\frac{1}{\sqrt2}\Bigr) (1+2p) \, G\biggl(\frac{16p^3(1+p)^3(2-p-p^2)(1+2p-4p^3-2p^4)^2}{(1+2p)^4}\biggr) \\
= & \Bigl(K+K'\Bigr)\biggl(\sqrt{\frac12-\frac{\sqrt{p^3(1+p)^3(2-p-p^2)}}{1+2p}}\biggr) \, K\biggl(\sqrt{\frac{p(2+p)^3}{(1+2p)^3}}\biggr).
\end{align*}
At $p = \frac{\sqrt{14}-\sqrt2-2}{4}$ (corresponding to $x^2 = (1-k_7)/2$, where $k_r$ denotes the $r$th singular value of $K$), the arguments in the two $K$'s coincide. We then compute the derivatives up to the 3th order for the above equation. Note that as $G$  satisfies a differential equation of order 4, higher order derivatives are not required; however, since the derivatives also contain the terms $EK, \,E^2$ and $K^2$, we are not a priori guaranteed a solution. After a significant amount of algebra, we amazingly end up with the rational series
\begin{equation}
\sum_{n=0}^\infty \frac{\binom{4n}{2n}\bigl(\frac14\bigr)_n^2}{(2n)!} \frac{5+92n+3120n^2-4032n^3}{2^{8n}}= \frac{8}{K(1/\sqrt2)} = \frac{32\sqrt{\pi}}{\Gamma(\frac14)^2}.
\end{equation}

\subsection{Some related constants} \label{secbraf3}

Using a different set of parameters ($\alpha=\beta=\gamma/2=1/4$ in \cite[equation (6.3)]{bailey1}), we have
\begin{equation} \label{baileys1a}
 \frac{\pi^3}{(2x x')^{\frac12} \, \Gamma(\frac34)^4} \ _3F_2\biggl({{\frac14,\frac14,\frac14}\atop{\frac12,\frac34}}; \frac{(1-2x^2)^4}{16x^2(x^2-1)}\biggr) = \bigl(K(x)+K'(x)\bigr)^2.
\end{equation}
In this case, applying Legendre's relation straightaway does not give anything non-trivial, but if we apply a quadratic transform to the $K(x)$ term first, then for the two arguments in the $K$'s to equal, we need to solve the equation
$\sqrt{1-x^2} = 2\sqrt{x}/(1+x),$ which gives $x=\sqrt2-1$. Subsequently we can use Legendre's relation to obtain
\begin{equation*}
\sum_{n=0}^\infty \frac{(\frac14)_n^4}{(4n)!} \bigl(8(457-325\sqrt2)\bigr)^n \bigl(7+20(11+6\sqrt2)n\bigr) = \frac{28 (82+58\sqrt2)^{\frac14}\, \pi^2}{\Gamma(\frac14)^4}.
\end{equation*}
More series of this type are possible at special values of $t$, which are in fact singular values; c.f.~the equation solved above is precisely the one to solve for the 2nd singular value (because $k_r$ and $k_r'$ are related by the modular equation of degree $r$ and satisfy $k_r^2+k_r'^2=1$). Therefore, to produce a series from \eqref{baileys1a} we do not need Legendre's relation; instead a single differentiation (in the same way Ramanujan series are produced in \cite{Bor}) suffices. For example, using $k_3$ we obtain one series corresponding to $1/\pi$ and another to $1/K(k_r)^2$:
\begin{align} 
\sum_{n=0}^\infty \frac{(\frac14)_n^4}{(4n)!}(-144)^n(1+20n) & = \frac{8\sqrt2\,\pi^2}{\Gamma(\frac14)^4}, \\
\sum_{n=0}^\infty \frac{(\frac14)_n^4}{(4n)!}(-144)^n(5-8n+400n^2) & = \frac{(\frac{2}{\sqrt3}-1)2^{\frac{49}6}\,\pi^5}{\Gamma(\frac13)^6\Gamma(\frac14)^4}.
\end{align}

\section{Concluding remarks}

In equations \eqref{ramaleg1}, \eqref{ramaleg3} and \eqref{ramaleg2}, we witness the ability of Legendre's relation to produce Ramanujan series which have linear (as opposed to cubic) polynomials in $n$. Series of the latter type are often connected with singular values (or more precisely, when $iK'(t)/K(t)$ is a quadratic irrationality), as is further supported by Remark \ref{rmk1leg} and Section \ref{secbraf3}. We take this connection slightly further here. 

We can bypass the need for Brafman's formula completely and produce Ramanujan series of type \eqref{rama} only using Legendre's relation and modular transforms. For instance, take the following version of Clausen's formula,
\begin{equation} \label{3f2trans} _3F_2\left({{\frac12,\frac12,\frac12}\atop{1,1}};4x^2(1-x^2)\right) = \frac{4}{\pi^2(1+x)} K(x) K\Bigl(\frac{2\sqrt{x}}{1+x}\Bigr), \end{equation}
where we have performed a quadratic transformation to get the right hand side. When $x^2+4x/(1+x)^2=1$, $x=\sqrt2-1$, the 2nd singular value. At this $x$, we take a linear combination of the right hand side of \eqref{3f2trans} and  its first derivative (since we know a Ramanujan series exists and involves no higher order derivatives), then apply Legendre's relation \eqref{legendrer}. The result is the series
\[ \sum_{n=0}^\infty \frac{(\frac12)_n^3}{n!^3} \bigl(2(\sqrt2-1)\bigr)^{3n} \bigl(1+(4+\sqrt2)n\bigr) = \frac{3+2\sqrt2}{\pi}, \]
which also follows from \eqref{thm1state} under the limit $k \to \sqrt2-1$. (Applying Legendre's relation to \eqref{3f2trans} and its derivatives when $x$ is not a singular value results in the trivial identity $0=0$, perhaps as expected.)

Applying the quadratic transform twice (i.e. giving the modular equation of degree 4), followed by transforming the $_3F_2$ in \eqref{3f2trans} and using Legendre's relation, we recover Ramanujan's series
\begin{equation}
\sum_{n=0}^\infty\frac{(\frac12)_n^3}{n!^3}\biggl(\frac{-1}{8}\biggr)^n (1+6n) =\frac{2\sqrt2}{\pi}. 
\end{equation}
For our final examples, using the degree 3 modular equation \eqref{cubicm}, we have
\[  _3F_2\left({{\frac12,\frac12,\frac12}\atop{1,1}};\frac{4p^3(1+p)^3(1-p)(2+p)}{(1+2p)^2}\right) = \frac{1}{1+2p} K\biggl(\frac{p^3(2+p)}{1+2p}\biggr)K\biggl(\frac{p(2+p)^3}{(1+2p)^3}\biggr). \]
From this and a similar identity with the $_3F_2$ transformed, we derive the Ramanujan series \eqref{ramaleg3} as well as \eqref{ramaleg2}.
This method seems to be a simple alternative to producing the Ramanujan series \eqref{rama}, since we only need to know the modular equations and Legendre's relation; there is no need to find, say, singular values of the second kind as is required in the approach in \cite{Bor}. 

\subsection{Computational notes}

While all the results presented here are rigorously proven, we outline a method to discover such results  numerically on a computer algebra system. Take the right hand side function in \eqref{key} and compute a linear combination of its derivatives with coefficients $A_i$. Replace the elliptic integrals ($K, K', E, E'$) by $X, X^2, X^4, X^8$ respectively (the indices are powers of 2). Evaluate to several thousand decimal places at the appropriate $z_0$ and collect the coefficients in $X$. Solve for $A_i$ so that Legendre's relation is satisfied (note all the terms such as $KK'$, $E^2$ are separated as different powers of $X$). Finally, identify $A_i$ with an integer relations program like PSLQ.

Many of our (algebraically proven) identities required several hours of computer time due to the complexity of the calculations and the sheer number of steps which needed human direction.  Computational shortcuts, in particular the chain rule, had to be applied manually in order to prevent overflows or out of memory errors. \medskip

\textbf{Acknowledgment:} the author would like to thank Wadim Zudilin for insightful discussions.

\begin{landscape}
\appendix \section{A rational series corresponding to $s=1/6$ in \eqref{braf1}}\label{verybig} \vspace{3cm}
\small
\begin{align*}
& \sum_{n=0}^\infty \frac{(\frac16)_n(\frac56)_n}{n!^2} P_n\biggl(\frac{2711618193169694758695252404104061775156278113}{2258716409636704529221049652293204395071099745}\biggr) \times \\
& \biggl(\frac{166184937571425083357425841157708260870933280014464273}{7435780195982339266650249045977973659251599896998500145}\biggr)^n  \times \\ 
& \bigg\{ -20346216828676992290712717150581898023487858358236131416304525612650289286877
09222379612782511175132608694355 - \\ 
& 926288201652707493107464063135571905251263636429595499527809454558701924211802
5175759095513913027910053062876 n - \\ 
& 1572343886557363398495068846593570880095595404692503176333162218457845
00043816520011553398701765788958763171200 n^2+ \\ 
& 188465262305106044960803571194037001709905551334563776406464312165980638288843
6566607878257622986156264868909056n^3 \bigg\} \\
= \ &   \frac{334603705874692071432125440218193102622139292751902590492288221670640916771026
08071535705583949762471120516075}{2\pi}.
\end{align*}
\normalsize
\end{landscape}

\end{document}